\let\origsection=\section \def\section{\@ifstar{\origsection*}{\mysection}} 
\def\mysection{\@startsection{section}{1}\z@{.7\linespacing\@plus\linespacing}{.5\linespacing}{\normalfont\scshape\centering\S}}
   \def\MR#1{}
\renewcommand{\PrintDOI}[1]{\doi{#1}}
\numberwithin{equation}{section}
\numberwithin{figure}{section}
\def\rmlabel{\upshape({\itshape \roman*\,})}
\let\polishlcross=\l
\def\l{\ifmmode\ell\else\polishlcross\fi}
\def\paragraph#1{%
  \noindent\textbf{#1.}\enspace}
\let\emptyset=\varnothing
\let\setminus=\smallsetminus
\def\moverlay{\mathpalette\mov@rlay}
\def\mov@rlay#1#2{\leavevmode\vtop{   \baselineskip\z@skip \lineskiplimit-\maxdimen
   \ialign{\hfil$\m@th#1##$\hfil\cr#2\crcr}}}
\newcommand{\charfusion}[3][\mathord]{
    #1{\ifx#1\mathop\vphantom{#2}\fi
        \mathpalette\mov@rlay{#2\cr#3}
      }
    \ifx#1\mathop\expandafter\displaylimits\fi}
\newcommand{\rbrac}[1]{\left( #1 \right)}
\DeclareFontFamily{U}  {MnSymbolC}{}
\DeclareSymbolFont{MnSyC}         {U}  {MnSymbolC}{m}{n}
\DeclareFontShape{U}{MnSymbolC}{m}{n}{
    <-6>  MnSymbolC5
   <6-7>  MnSymbolC6
   <7-8>  MnSymbolC7
   <8-9>  MnSymbolC8
   <9-10> MnSymbolC9
  <10-12> MnSymbolC10
  <12->   MnSymbolC12}{}
\DeclareMathSymbol{\powerset}{\mathord}{MnSyC}{180}
\let\epsilon=\varepsilon
\let\eps=\epsilon
\let\rho=\varrho
\let\theta=\vartheta
\let\kappa=\varkappa
\let\E=\EE
\newcommand{\tbf}[1]{\textbf{#1}}
\newcommand{\mc}[1]{\mathcal{#1}}
\def\rev{\mathrm{rev}}
\def\Rev{\mathrm{Rev}}
\newcommand{\bin}{\mathrm{Bin}}
\theoremstyle{plain}
\newtheorem{thm}{Theorem}[section]
\newtheorem{prop}[thm]{Proposition}
\newtheorem{claim}[thm]{Claim}
\newtheorem{cor}[thm]{Corollary}
\newtheorem{lemma}[thm]{Lemma}
\theoremstyle{definition}
\let\phi=\varphi
\title{A note on non-isomorphic edge-color classes in random graphs}
\author{Patrick Bennett}
\address{Department of Mathematics, Western Michigan University, Kalamazoo, MI, USA}
\email{patrick.bennett@wmich.edu}
\thanks{The first author was supported in part by Simons Foundation Grant \#426894.}
\author{Ryan Cushman}
\address{Department of Mathematics, Toronto Metropolitan University, Toronto, ON, Canada}
\email{ryan.cushman@ryerson.ca}
\author{Andrzej Dudek}
\address{Department of Mathematics, Western Michigan University, Kalamazoo, MI, USA}
\email{andrzej.dudek@wmich.edu}
\thanks{The third author was supported in part by Simons Foundation Grant \#522400.}
\author{Elizabeth Sprangel}
\address{Department of Mathematics, Iowa State University, Ames, IA, USA}
\email{sprangel@iastate.edu}
\begin{document}

\maketitle

\begin{abstract}
    For a graph $G$, let $\tau(G)$ be the maximum number of colors such that there exists an edge-coloring of $G$ with no two color classes being isomorphic. We investigate the behavior of $\tau(G)$ when $G=G(n, p)$ is the classical Erd\H{o}s-R\'enyi random graph. 
\end{abstract}

\section{Introduction}

Recall that for graphs $H, G$ we define an \tbf{$H$-factor of $G$} to be a covering of the vertices of $G$ by $|G|/|H|$ vertex-disjoint copies of $H$. Equivalently, an $H$-factor of a graph $G$ is a partition of the vertices of $G$ such that each part contains $H$ as a spanning subgraph.
For example, for $K_r$-factors, we wish to partition the vertices of $G$ such that each part is an $r$-clique. 
In this paper we consider what is in some sense an opposite problem:
when can we partition the edges of $G$ into parts which are pairwise-non-isomorphic?  Clearly, using one part will be sufficient, but letting each edge be its own part would not. Therefore, we are interested in the maximum number of parts where this property holds. More precisely, we define the parameter $\tau(G)$ to be the maximum number of colors such that there exists a coloring of the edges of $G$ such that any two color classes are  non-isomorphic. Note that $\tau(G)$ exists, since one color satisfies the condition. In this paper we focus on $\tau(G(n, p))$ where $G(n, p)$ is the Erd\H{o}s-R\'enyi random graph. 

Before we state our results we give some history for $H$-factors in $G(n, p)$. In~\cite{E1981}, Erd\H{o}s reported that the Shamir problem~\cite{SS1983}---when do perfect matchings occur in the $r$-uniform random hypergraph $H_r(n,p)$?---was one of the combinatorial problems he would most like to see solved. 
A similar question was proposed and some initial work done by Ruc\'inski~\cite{R1992} and Alon and Yuster~\cite{AY1993} for the case of $G(n,p)$: what is the threshold for an $H$-factor? 
After some partial results,
both questions were answered by Johansson, Kahn and Vu in~\cite{JKV2008}.
More precise answers have also recently been offered by Kahn~\cite{K2019} and Riordan~\cite{R2018}.
Kahn determined a sharp threshold for a matching in $H_r(n,p)$ and Riordan created a coupling between $H_r(n,p)$ and $G(n,p)$ in which hyperedges of $H_r(n,p)$ correspond to cliques in $G(n,p)$ for $r \ge 4$ (the case of $r =3$ was proved in~\cite{H2021} by Heckel).

We have several results about $\tau(G(n,p))$, covering various edge densities. Our first theorem gives bounds for a large range of $p$. We say that a sequence of events $E_n$ happens \tbf{asymptotically almost surely} (a.a.s.) if $\Pr(E_n)\rightarrow 1$ as $n \rightarrow \infty$. All asymptotics in this paper are as $n \rightarrow \infty$, and $p=p(n)$ may depend on $n$. We sometimes write inequalities that are valid only for sufficiently large $n$. We use the standard big-$O$ and little-$o$ notation, as well as the standard $\Omega, \Theta$ notation. Our first result covers the dense and not-too-sparse regime.

\begin{thm}\label{thm:main}
There is an absolute constant $C>0$ such that if $\frac{C\log n}{n} \le p\le \frac{1}{\log n}$ and $G=G(n,p)$, then  a.a.s.
\[
\Omega\left(\frac{pn^2}{\log n}\right) = \tau(G) = O\left(\frac{pn^2\log\log n}{\log n}\right).
\]
\end{thm}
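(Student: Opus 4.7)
\emph{Upper bound.} The argument is a counting one. Split color classes by size: call a class $C$ \emph{small} if $|C| \le m$ and \emph{large} otherwise, where $m := \lfloor \log n / \log\log n \rfloor$. Since large classes use at least $m+1$ edges each, their number is at most $e(G)/(m+1)$; using $e(G) = (1+o(1))pn^2/2$ a.a.s., this is $O(pn^2 \log\log n / \log n)$. The number of small classes is bounded by the number of non-isomorphic graphs with at most $m$ edges, and a crude count of labeled graphs on $\{1,\dots,2m\}$ with at most $m$ edges gives the bound
\[
(m+1)\binom{\binom{2m}{2}}{m} \le (2em)^{m+1}.
\]
A direct calculation yields $(2em)^{m+1} = n^{1-(1+o(1))\log\log\log n / \log\log n} = o(n\log\log n)$, which is $o(pn^2 \log\log n/\log n)$ once $p \ge C\log n/n$. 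Combining the two contributions gives the claimed upper bound.

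\emph{Lower bound.} The task is to exhibit a coloring with $\tau = \Omega(pn^2/\log n)$ pairwise non-isomorphic classes. Set $s := 3\log n$. By Cayley's formula and Stirling, the number of non-isomorphic trees on $s$ vertices is at least $s^{s-2}/s! = \Omega(e^s/s^{5/2}) = \Omega(n^3/\log^{5/2}n)$, much larger than the target $\tau \le n^2/\log^2 n$. Hence diversity of isomorphism types is abundant; the plan is to pack $E(G)$ with $\tau$ edge-disjoint trees of size roughly $s$ realizing $\tau$ chosen pairwise non-isomorphic shapes, assigning each a distinct color and absorbing leftover edges by enlarging one of the trees (while keeping its iso type distinct from the others, for which the abundance of unused shapes gives plenty of room).

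I would execute the packing iteratively: having chosen edge-disjoint $H_1, \ldots, H_{i-1}$ realizing prescribed distinct tree iso types, set $G_i := G \setminus \bigcup_{j<i} H_j$ and grow a tree $H_i \subseteq G_i$ with $\sim s$ edges by a BFS-like exploration from a vertex of large $G_i$-degree, making branch choices to realize a preselected shape not yet used. The main obstacle is that after $\Theta(pn^2/\log n)$ iterations nearly all of $E(G)$ has been removed, so the degree sequence of $G_i$ degrades severely; one has to argue that throughout the process $G_i$ retains enough pseudorandomness---e.g., few low-degree vertices and quasi-random neighborhood expansion---to support the BFS extraction of the target shape. A natural device is to perform the packing in rounds, at each round randomly selecting a small fraction of the desired subgraphs in parallel, so that the remaining graph stays quasi-random with high probability. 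A concentration argument for the degree sequence plus a union bound over the $\tau$ target iso types should then close the proof.
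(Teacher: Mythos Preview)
Your upper bound is correct and essentially identical to the paper's: split classes by a size threshold of order $\log n/\log\log n$, bound the small ones by counting unlabeled graphs via $\binom{\binom{2m}{2}}{m}$, and bound the large ones by $e(G)/m$. The paper phrases this as a proof by contradiction with threshold $f=\log n/(2\log\log n)$, but the content is the same.

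The lower bound has a genuine gap. You correctly identify the overall scheme---pack $\Omega(pn^2/\log n)$ edge-disjoint trees of size $\Theta(\log n)$ with pairwise distinct isomorphism types---but the packing step is not a proof, only a wishlist. Two concrete problems. First, you take arbitrary trees on $s=3\log n$ vertices via Cayley's formula; such trees can have maximum degree up to $s-1\approx 3\log n$, and at the bottom of the range $np=C\log n$ the random graph simply does not have vertices of such degree (let alone after a constant fraction of edges has been deleted). The paper avoids this by restricting to trees of maximum degree at most three, proving separately that there are still at least $n^2$ pairwise non-isomorphic such trees on $\lfloor 3\log_2 n\rfloor$ vertices. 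Second, and more seriously, your handling of the degradation of $G_i$ is not an argument: ``do it in rounds, the remainder stays quasi-random, concentration plus union bound'' names the difficulty but does not resolve it. The paper's actual mechanism is quite different and specific: it runs a randomized greedy embedding that reveals pairs of $G(n,p)$ on the fly, and the analysis tracks, for each vertex $v$, the number of pairs containing $v$ that have been revealed (split into those revealed while $v$ is ``active'' versus ``inactive''). The degree-$3$ bound guarantees that each time $v$ becomes active only $O(1)$ successes are needed at $v$, and a sequence of Chernoff bounds then shows that after $\eps n^2$ revealed pairs no vertex has exhausted its supply. None of this machinery is present in your sketch, and without it the embedding step does not go through.
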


When we let $G(n,p)$ become slightly sparser, we can also determine bounds for $\tau$, although the lower bound is slightly less precise. 
\begin{thm}\label{thm:main_sparse}
There is an absolute constant $C>0$ such that if $p\ge \frac{C}{n}$ and $G=G(n,p)$, then  a.a.s.
\[
\Omega\left(\frac{pn^2}{\log^2 n}\right) = \tau(G) = O\left(\frac{pn^2\log\log n}{\log n}\right).
\]
\end{thm}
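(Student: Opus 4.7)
My plan is to handle the two bounds separately, recycling the counting upper bound from Theorem~\ref{thm:main} and providing a weaker lower-bound construction tailored to the sparse regime.

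For the upper bound, let $N(k)$ denote the number of isomorphism types of graphs with at most $k$ edges. Since any such graph has at most $2k$ non-isolated vertices, a crude bound gives $N(k) = \exp(O(k \log k))$. If a valid coloring uses $\tau$ colors, then at most $N(k)$ of them can correspond to classes with at most $k$ edges, and the remaining $\tau - N(k)$ classes together use more than $k(\tau - N(k))$ edges. Thus $\tau \le N(k) + e(G)/k$, and balancing at $k \asymp \log n/\log\log n$ yields $\tau = O(pn^2 \log\log n/\log n)$, using $e(G) = \Theta(pn^2)$ a.a.s.\ (which holds by Chernoff since $pn \ge C$).

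For the lower bound, I would aim to produce $T := \lfloor c_1\, pn^2/\log^2 n \rfloor$ pairwise edge-disjoint, pairwise non-isomorphic subgraphs of $G(n,p)$, for a small constant $c_1 > 0$. I would use small trees as targets: set $K := \lceil c_2 \log n \rceil$ and observe that the number of isomorphism types of trees with at most $K$ edges grows exponentially in $K$, hence exceeds $T$ when $c_2$ is chosen suitably. Pick $T$ pairwise non-isomorphic trees $F_1, \ldots, F_T$ with $e(F_i) \le K$; their total edge count is at most $TK = O(pn^2/\log n) = o(e(G))$, so there is plenty of room.

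The targets are embedded greedily one at a time. After embedding $F_1, \ldots, F_{i-1}$, only $o(pn^2)$ edges have been used, so the remaining graph inherits essentially all of the structure of $G(n,p)$ relevant to tree embedding. For $p \ge C/n$, the $2$-core of $G(n,p)$ is a.a.s.\ an expander on $\Theta(n)$ vertices with $\Theta(n)$ edges, and any tree with at most $K = O(\log n)$ edges can be embedded into any graph obtained from this $2$-core by deleting a vanishing fraction of its edges, e.g., via a Friedman--Pippenger-type embedding or a direct expansion/BFS argument. This lets me carry the greedy embedding through all $T$ steps.

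The main obstacle is precisely this embedding step: since the minimum degree of $G(n,p)$ is zero in this regime, there are many isolated vertices and one cannot appeal to Dirac-type tree-embedding results. One must instead work inside an expanding core whose expansion persists through the greedy removal, and show that the number of forbidden edges at each step does not destroy this expansion. The extra $\log n$ factor in the denominator of the lower bound (as compared to Theorem~\ref{thm:main}) reflects precisely this loss: in the sparse regime one can only safely use trees of size $K = O(\log n)$, rather than the $K = O(\log n/\log\log n)$ that would be needed to match the upper bound.
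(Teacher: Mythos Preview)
Your upper bound is exactly the paper's argument (Theorem~\ref{thm:main}'s counting bound goes through verbatim once $p \ge C/n$), so no issues there.

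For the lower bound your route diverges from the paper's, and as written it does not close. Two concrete problems:

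\emph{First, the embedding step is the whole difficulty and you have not supplied it.} You assert that after deleting a vanishing fraction of the edges of the $2$-core, any tree on $O(\log n)$ edges still embeds by a Friedman--Pippenger-type argument. But you are removing edges iteratively and not randomly: nothing prevents the first few hundred embeddings from concentrating all their edges in the neighbourhood of a single small set, destroying local expansion there. Making an iterated bounded-degree-tree packing work in $G(n,C/n)$ is genuinely nontrivial (this is essentially the content of results such as Ferber--Samotij), and cannot be dismissed in a sentence. Also, as stated you allow \emph{all} trees with at most $K$ edges; a star on $K=c_2\log n$ leaves cannot embed in $G(n,C/n)$ since the maximum degree there is only $\Theta(\log n/\log\log n)$, so you must restrict to bounded-degree trees explicitly.

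\emph{Second, your parameter accounting is inconsistent.} With targets of size $K=\Theta(\log n)$ and total edge budget $TK=o(e(G))$, your scheme, if it worked, would yield $T=\Omega(pn^2/\log n)$ classes --- the same as Theorem~\ref{thm:main}, \emph{not} $pn^2/\log^2 n$. Your closing explanation (``we can only use $K=O(\log n)$ instead of $K=O(\log n/\log\log n)$'') is backwards: smaller classes give \emph{more} colours, not fewer. There is no reason internal to your sketch for the extra $\log n$ loss; it is simply that you have not actually carried out the hard step.

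The paper avoids all of this with a different, much more explicit construction. Instead of trees, it uses \emph{linear forests}: each integer partition of $k=\lceil \log^2 n\rceil$ gives a union of disjoint paths with those lengths, and since $p(k)\ge e^{2\sqrt{k}}\ge n^2$ this already provides enough pairwise non-isomorphic targets. For the host, it shows via a DFS argument that for $p\ge C/n$ one can find $\Omega(np)$ edge-disjoint paths each of length $\Omega(n)$. Embedding a linear forest into a long path is then trivial --- just read off consecutive segments --- so each long path absorbs $\Omega(n/\log^2 n)$ forests, giving $\Omega(np)\cdot\Omega(n/\log^2 n)=\Omega(pn^2/\log^2 n)$ colour classes. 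The $\log^2 n$ comes from the partition asymptotics ($p(k)\approx e^{c\sqrt{k}}$ forces $k\asymp\log^2 n$), not from any embedding difficulty.
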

Letting $p=1$ yields the following result for complete graphs.

\begin{cor}\label{cor:kn} Let $K_n$ denotes the complete graph of order~$n$. Then,
\[
\Omega\left(\frac{n^2}{\log^2 n}\right) = \tau(K_n) = O\left(\frac{n^2\log\log n}{\log n}\right).
\]
\end{cor}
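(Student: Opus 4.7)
The plan is to derive Corollary \ref{cor:kn} as an immediate specialization of Theorem \ref{thm:main_sparse} with $p = 1$. The key observation is that $G(n,1)$ is deterministically equal to $K_n$, so any event that holds asymptotically almost surely for $G(n,1)$ holds with probability $1$ and in particular holds for $K_n$ for all sufficiently large $n$. Since the hypothesis $p \geq C/n$ of Theorem \ref{thm:main_sparse} is trivially satisfied when $p = 1$ and $n \geq C$, substituting $p = 1$ into the two displayed bounds on $\tau(G(n,p))$ yields exactly the two bounds on $\tau(K_n)$ claimed in the corollary. Thus the proof reduces to a single invocation of Theorem \ref{thm:main_sparse}.

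Because the corollary is a one-line consequence of Theorem \ref{thm:main_sparse}, there is essentially no obstacle to overcome. Conceptually, one should note where each of the two bounds comes from. The upper bound is a counting bound: if $N(k)$ denotes the number of isomorphism classes of graphs with exactly $k$ edges, then $\log N(k) = \Theta(k \log k)$, and balancing the at most $\sum_{j \le k^*} N(j)$ color classes of size at most $k^*$ against the at most $\binom{n}{2}/k^*$ classes of size exceeding $k^*$, with $k^* \sim \log n / \log\log n$, produces the upper bound $O(n^2 \log\log n / \log n)$. The lower bound $\Omega(n^2/\log^2 n)$ reflects a construction (likely probabilistic or greedy) decomposing $E(K_n)$ into that many pairwise non-isomorphic subgraphs.

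If one wished to prove Corollary \ref{cor:kn} from scratch without any random graph machinery, the counting upper bound above would carry over verbatim. The only interesting work would then lie in supplying a direct deterministic edge-decomposition of $K_n$ into $\Omega(n^2/\log^2 n)$ pairwise non-isomorphic pieces, which one could attempt by greedily packing many distinct "templates" with up to $\Theta(\log n)$ edges each, ensuring at each step that a fresh isomorphism type is still available and still fits in the remaining graph. The gap of a factor $\log n \cdot \log\log n$ between the two bounds is an interesting question in its own right, but closing it is not required for the corollary, which follows directly from the random graph result already in hand.
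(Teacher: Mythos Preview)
Your proposal is correct and follows exactly the paper's own approach: the corollary is stated immediately after Theorem~\ref{thm:main_sparse} with the one-line justification ``Letting $p=1$ yields the following result for complete graphs,'' which is precisely your argument that $G(n,1)=K_n$ deterministically and that $p=1\ge C/n$ for large $n$. The additional remarks you make about the provenance of the upper and lower bounds are accurate commentary but not needed for the proof.
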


Finally, we consider some very sparse regimes, in which we determine the order of magnitude of $\tau(G(n,p))$.
\begin{thm}\label{thm:main_very_sparse}
Let $G=G(n,p)$ be such that $n^{-\frac{k}{k-1}} \ll p \ll n^{-\frac{k+1}{k}}$ for some positive integer~$k \ge 2$. Let 
$\ell = \ell(k) = \lfloor\frac{\sqrt{8k-7}+1}{2}\rfloor$. Then,
\[
\tau(G) = \Theta\left(n^{\frac{(\ell+2)(\ell-1)}{2\ell}} p^{\frac{\ell-1}{2}} \right).
\]
\end{thm}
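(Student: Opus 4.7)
My plan has four parts: (1) establish the structure of $G(n,p)$ in this regime; (2) count the tree components; (3) produce the lower bound by construction; (4) prove the matching upper bound.

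For (1) and (2), first-moment computations using $p\ll n^{-(k+1)/k}$ show that a.a.s.\ $G=G(n,p)$ is a forest whose components are trees on at most $k$ vertices: for $v\ge k+1$ the expected number of $v$-vertex tree-components is $n^vp^{v-1}=o(1)$, and cyclic components are even rarer (their expected count is $O((np)^v)$ which tends to $0$). A second-moment/Chebyshev argument then gives, for every tree shape $s$ on $v\le k$ vertices, that the number $N_s$ of components of $G$ isomorphic to $s$ is a.a.s.\ $(1+o(1))\,\E N_s = \Theta(n^vp^{v-1})$.

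For the lower bound (3), fix tree shapes $s_1,\dots,s_{\ell-1}$ with $|V(s_i)|=i+1$ (for instance $s_i=K_{1,i}$) and write $N_i:=N_{s_i}$. Let $P := \bigl(\prod_{i=1}^{\ell-1}N_i\bigr)^{1/\ell}$; a short calculation gives $P=\Theta(n^{(\ell-1)(\ell+2)/(2\ell)}p^{(\ell-1)/2})$. Set $A_i := \lfloor N_i/P\rfloor$. For each nonzero tuple $(a_1,\dots,a_{\ell-1})\in\prod_i\{0,1,\dots,A_i\}$, greedily assemble a color class using $a_i$ previously-unused components of $G$ of shape $s_i$. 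Since the total demand for $s_i$-components across all tuples is approximately $A_iP/2\le N_i/2$, the construction is feasible. Distinct tuples correspond to distinct multi-sets of pairwise non-isomorphic tree shapes, so the color classes are pairwise non-isomorphic, giving $\tau(G)\ge\prod_i(A_i+1)-1=\Omega(P)$.

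The upper bound (4) is the main obstacle. Each color class is determined up to isomorphism by its multi-set $\mu_c$ of shape-components, and for every shape $s$ one has $\sum_c\mu_c(s)\le N_s$. The problem reduces to bounding the maximum number of distinct nonzero $\mu$'s obeying these per-shape budget constraints. By LP duality, any feasible $(y_s)_s\ge 0$ yields
\[
\tau(G)\;\le\;\sum_s N_sy_s+\sum_{\mu\ne 0}\max(0,\,1-\mu\cdot y);
\]
taking $y_s\sim P/N_s$ for shapes with $|V(s)|\le\ell$ and $y_s=1$ for larger shapes (whose total count $\sum_{v>\ell}N_v$ is $o(P)$ in this regime) should balance both sums at $O(P)$, yielding $\tau(G)=O(P)$. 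The delicate point is estimating $\sum_\mu\max(0,\,1-\mu\cdot y)$ sharply---the continuous simplex-volume approximation is only reliable when all relevant $y_s$ are small, so extra combinatorial care is required when multiple tree shapes share a vertex count (for instance $P_4$ and $K_{1,3}$ once $\ell\ge 4$), and when $N_s$ is only $\Theta(1)$ for the shapes near size $k$.
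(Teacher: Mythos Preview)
Your parts (1)--(3) match the paper almost exactly: the paper also uses first and second moments to see that a.a.s.\ all components are trees on at most $k$ vertices with $\Theta(n^vp^{v-1})$ components of each order $v\le k$, and the lower bound construction is the same (the paper uses paths $P_v$ rather than stars $K_{1,v-1}$, but this is immaterial).

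For the upper bound your approach is genuinely different from the paper's. The paper first coarsens each color class to a vector $(t_1,\dots,t_{k-1})$ recording only the \emph{number} of components of each order (not their shapes), noting that each such vector represents at most $O_k(1)$ isomorphism types. It then shows that classes whose largest component has order $>\ell$ contribute $o(P)$, and for classes whose largest component has order exactly $\ell$ it sets up the optimisation: maximise $F=\sum_t x_t$ (where $t=(t_1,\dots,t_{\ell-2})$ and $x_t$ counts the classes with those first $\ell-2$ coordinates) subject to the linear budgets $\sum_t t_i x_t\le A_i=O(n^{i+1}p^i)$ and the \emph{quadratic} budget $\sum_t x_t^2\le B=O(n^\ell p^{\ell-1})$ coming from $1+2+\cdots+x_t\ge x_t^2/2$ copies of order-$\ell$ trees. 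The bound $F=O(P)$ is then extracted by writing $B+2\sum r_iA_i-2sF\ge -O(s^\ell(np)^{-(\ell-1)(\ell-2)/2})$ via completing the square, with an explicit choice of weights $r_i,s$.

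Your LP-duality inequality is correct and, once the issue you flag is fixed, arguably cleaner: it bounds all classes at once without singling out the ``largest component of order $\ell$'' stratum. However, the obstacle you name is real in your shape-based formulation. With one weight $y_s$ per \emph{shape}, the simplex $\{\mu:\mu\cdot y<1\}$ contains $\Theta\bigl(\prod_s N_s/P\bigr)$ lattice points; when $\ell\ge 4$ and there are several shapes on $v$ vertices (e.g.\ $P_4$ and $K_{1,3}$), each contributes a factor $\Theta(n^vp^{v-1}/P)\gg 1$, so the product overshoots $P$. The fix is exactly the paper's first move: work with vertex-count vectors $(t_1,\dots,t_{\ell-1})$ instead of shape multisets, absorbing the $O_k(1)$ shapes per order into the implicit constant. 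Then with $y_i\asymp P/N_i$ for $1\le i\le\ell-1$ and $y_i=1$ for $i\ge\ell$, the first sum is $\sum_{i\le\ell-1} N_iy_i+\sum_{i\ge\ell}N_i=O(P)+o(P)$, and the second sum is at most the number of lattice points in a box $\prod_{i=1}^{\ell-1}[0,N_i/P]$, which is $O\bigl(\prod_{i=1}^{\ell-1}N_i/P^{\ell-1}\bigr)=O(P)$ since $\prod_{i=1}^{\ell-1}N_i=\Theta(P^\ell)$. Your worry about shapes $s$ with $N_s=\Theta(1)$ near size $k$ is not an issue: those all have size $>\ell$, get weight $y_s=1$, and their total count is $o(P)$.
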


In the sparsest cases (Theorem \ref{thm:main_very_sparse}), we are able to determine the order of magnitude while in the other cases (Theorems \ref{thm:main} and \ref{thm:main_sparse}), the upper and lower bounds are separated by a logarithmic factors in the numerator or denominator. It would be an interesting future direction to remove this ambiguity about the order of magnitude and have a more precise result. In particular improving Corollary \ref{cor:kn} may be easier than working with $G(n, p)$.

We will be using the following forms of Chernoff's bound (see, e.g., \cite{JLR}). 

\begin{lemma}[Chernoff bound]
Let $X\sim \bin(n,p)$ and $\mu = \E(X)$. Then, for all $0<\delta<1$
\begin{equation*}
\Pr(X \ge (1+\delta) \mu) \le \exp(-\mu \delta^2/3)
\end{equation*}
and
\begin{equation*}
\Pr(X \le (1-\delta)\mu) \le \exp(-\mu \delta^2/2). 
\end{equation*}
Furthermore if $R \ge 2e \mathbb \mu$, we have 
\begin{equation*}
\Pr(X \ge R) \le 2^{-R}.
\end{equation*}
\end{lemma}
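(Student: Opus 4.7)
The plan is to derive all three inequalities from the classical moment generating function (MGF) method. Writing $X = \sum_{i=1}^{n} X_i$ with independent Bernoulli$(p)$ summands, for any $t \in \RR$ Markov's inequality applied to $e^{tX}$ gives $\Pr(X \ge a) \le e^{-ta}\, \E[e^{tX}]$ when $t > 0$, and the symmetric statement for the lower tail when $t < 0$. Independence yields $\E[e^{tX}] = \prod_{i=1}^{n} \E[e^{tX_i}]$, and the pointwise inequality $1 - p + p e^t \le \exp(p(e^t - 1))$ (from $1+x \le e^x$) gives $\E[e^{tX}] \le \exp(\mu(e^t - 1))$. This single analytic input is the source of all three estimates.

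For the upper tail with $a = (1+\delta)\mu$, I would choose the optimal $t = \log(1+\delta) > 0$. Substituting produces the classical form
\[
\Pr(X \ge (1+\delta)\mu) \le \left(\frac{e^\delta}{(1+\delta)^{1+\delta}}\right)^{\mu} = \exp\bigl(-\mu\bigl[(1+\delta)\log(1+\delta) - \delta\bigr]\bigr).
\]
The desired $\exp(-\mu \delta^2/3)$ then follows from the elementary one-variable inequality $(1+\delta)\log(1+\delta) - \delta \ge \delta^2/3$ on $(0,1)$, which is verified by showing the difference of the two sides vanishes at $\delta = 0$ along with its first derivative and is nonnegative throughout the interval. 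The lower-tail bound is analogous: taking $t = \log(1-\delta) < 0$ gives $\Pr(X \le (1-\delta)\mu) \le \exp\bigl(-\mu[(1-\delta)\log(1-\delta) + \delta]\bigr)$, after which the companion inequality $(1-\delta)\log(1-\delta) + \delta \ge \delta^2/2$ on $(0,1)$ closes the argument; this one is slightly cleaner because its defining difference has a nonnegative second derivative throughout the interval.

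For the third estimate, with $R \ge 2e\mu$, I would use the same MGF bound but optimize differently: setting $t = \log(R/\mu) > 0$ in $\Pr(X \ge R) \le e^{-tR}\exp(\mu(e^t-1))$ yields $\Pr(X \ge R) \le (e\mu/R)^R \cdot e^{-\mu} \le (e\mu/R)^R$. Since $R \ge 2e\mu$ forces $e\mu/R \le 1/2$, the right-hand side is at most $2^{-R}$, as required. The only non-cosmetic work in the entire argument is the two one-variable inequalities on $(0,1)$ used to convert the ratio expressions into the clean quadratic exponents $\delta^2/3$ and $\delta^2/2$; these are the expected obstacle, though a mild one, since both reduce to routine calculus.
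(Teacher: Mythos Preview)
Your argument is correct and is exactly the standard MGF derivation of these three Chernoff-type bounds. Note, however, that the paper does not actually prove this lemma: it is stated with a reference to \cite{JLR} and used as a black box, so there is no ``paper's own proof'' to compare against. What you have written is essentially the proof one finds in that reference (or any standard text), so in that sense your approach matches the intended one; the two auxiliary inequalities $(1+\delta)\log(1+\delta)-\delta \ge \delta^2/3$ and $(1-\delta)\log(1-\delta)+\delta \ge \delta^2/2$ on $(0,1)$ are indeed the only real work, and your sketches of how to verify them by differentiating are correct.
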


We recently learned that the lower bounds in Theorems~\ref{thm:main} and~\ref{thm:main_sparse} can be also derived from a result of Ferber and Samotij~\cite{FS2019}. Since their approach is different from ours, we decided to keep our proofs for the sake of completeness.

\section{Proof of Theorem~\ref{thm:main}}

\subsection{Upper bound}
We start with the lower bound. Let $p\ge\frac{\log n}{n}$ and $G=G(n,p)$. We are going to show that $\tau(G) \le pn^2/f$, where $f := f(n) = \frac{\log n}{2\log \log n}$. 

For a proof by contradiction, assume that $\tau(G) > pn^2/f$. Let $\alpha_{\ge f}$ be the number of color classes with at least $f$ edges.
Similarly, we define $\alpha_{<f}$ as the number of color classes with less than $f$ edges. 

First we estimate $\alpha_{<f}$ from above. Observe that any graph with at most $i$ edges and no isolated vertices cannot have more than $2i$ vertices. 
Thus the number of colors having $i$ edges is at most $\displaystyle \binom{\binom{2i}{2}}{i}$, the number of ways to choose $i$ edges on $2i$ vertices. Since we consider graphs with at most $f$ edges, we get
\begin{align}
\alpha_{<f} &\le \sum_{i=1}^f \binom{\binom{2i}{2}}{i} \le f\binom{\binom{2f}{2}}{f}
\le f\binom{2f^2}{f}
\le f(2ef)^f
=f \exp\{f\log(2ef)\}\notag \\
&=f \exp\left\{ \frac{\log n}{2\log \log n} \left( \log\log n - \log\log\log n +1 \right) \right\}\notag \\
& \le f \exp\left\{\frac{1}{2}\log n\right\} = fn^{1/2} = o\left(\frac{pn^2}{f}\right),\label{eq:ub}
\end{align}
where the latter follows from $p\ge\frac{\log n}{n}$. 

Next note that $\alpha_{\ge f} \ge 2pn^2/(3f)$; indeed, otherwise we would have $\alpha_{\ge f} < 2pn^2/(3f)$ and therefore
\[
\frac{pn^2}{f} < \tau(G) = \alpha_{\ge f} + \alpha_{< f} < \frac{2pn^2}{3f} + o\left(\frac{pn^2}{f}\right) < \frac{pn^2}{f},
\]
a contradiction.

Finally, observe that the the lower bound on $\alpha_{\ge f}$ implies that the number of edges in $G$ is at least 
\[
f \alpha_{\ge f} \ge f \frac{2pn^2}{3f} = \frac{2pn^2}{3},
\]
which is a contradiction, since it is well-known that the number of edges in $G$ is highly concentrated around its mean $\binom{n}{2}p=(1+o(1)) pn^2/2$. Thus, $\tau(G) \le pn^2/f$, as required.


\subsection{Lower bound}

Let $\frac{C\log^2 n}{n} \le p\le \frac{1}{\log n}$ for some $C>0$ and $G = G(n,p)$. 
We start with an auxiliary result. 
\begin{prop}\label{prop:tree}
Let $t \ge 1$. There is a set $\mathcal{T}$ of at least 
\[
\frac{2^{t-1}}{t^{3/2}(t+1)}
\]
pairwise-non-isomorphic trees on $t$ vertices with maximum degree at most three.
\end{prop}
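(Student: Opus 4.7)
The plan is to count ordered rooted binary trees on $t$ vertices via Catalan numbers and then convert to unlabeled unrooted trees by careful bookkeeping. Recall that an ordered rooted binary tree on $t$ vertices is a rooted tree in which each vertex carries a (possibly empty) left subtree and a (possibly empty) right subtree; the standard recursion obtained by decomposing at the root shows that the number of such trees on $t$ vertices is the Catalan number $C_t = \frac{1}{t+1}\binom{2t}{t}$. Each such tree, viewed as an abstract graph, has maximum degree at most $3$: every non-root vertex has at most two children and one parent, while the root has at most two children.

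Next, consider the map sending each ordered rooted binary tree to the isomorphism class of its underlying unrooted unlabeled graph. I claim that every fiber of this map has size at most $t \cdot 2^t$. Indeed, once an unlabeled unrooted tree $T$ is fixed, any ordered rooted binary tree whose underlying graph is $T$ can be recovered from a choice of root vertex in $T$ (at most $t$ options) together with, at each vertex, a left/right labeling of its at most two children (at most $2^t$ options in total). Consequently, the number of pairwise non-isomorphic trees on $t$ vertices with maximum degree at most $3$ is at least $C_t / (t \cdot 2^t)$.

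It remains to verify that $C_t/(t \cdot 2^t) \ge \frac{2^{t-1}}{t^{3/2}(t+1)}$, which rearranges to the elementary inequality $\binom{2t}{t} \ge \frac{4^t}{2\sqrt{t}}$. This is proved by a short induction on $t$: the base case $t=1$ is an equality, and the inductive step reduces to $(2t+1)^2 \ge 4t(t+1)$, i.e.\ $4t^2 + 4t + 1 \ge 4t^2 + 4t$. I do not anticipate any serious obstacle; the only step requiring mild care is the fiber-size analysis in the second paragraph, and the rest is routine Catalan estimation.
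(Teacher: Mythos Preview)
Your proof is correct and follows essentially the same approach as the paper: count ordered rooted binary trees via the Catalan number $C_t$, bound each fiber of the ``forget the root and the left/right data'' map by $t\cdot 2^t$, and finish with the estimate $\binom{2t}{t}\ge 4^t/(2\sqrt{t})$. The only difference is that you supply a clean induction proving this last inequality for all $t\ge 1$, whereas the paper simply invokes it for sufficiently large~$t$.
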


\begin{proof}

First we will enumerate all \emph{rooted binary trees} on $t$ vertices. 
Designate one vertex as the \emph{root}. We now choose zero, one, or two vertices that will be adjacent to the root and assign these as the right and left \emph{ children} of the root (where applicable). For each new vertex, repeat this process of adding at most two children  until $t$ vertices have been used. 
It is not difficult to see that the number of all rooted binary trees is exactly the $t$-th Catalan number, i.e., $\frac{1}{t+1}\binom{2t}{t}$ (see, e.g.,~Section 2.3.4.4 in~\cite{Knuth1997}). 

Now observe that at most $t2^t$ rooted binary trees can be in any particular isomorphism class. Indeed, let $T$ be a binary tree of order $t$ without a marked root. We need to find out how many rooted binary trees are isomorphic to $T$. Since $T$ has order $t$, we have at most $t$ choices for a root. Then each of the other $t-1$ vertices in $T$ will have a unique parent, with at most two choices for how each vertex relates to its parent (left or right), yielding at most  $t2^t$ trees isomorphic to $T$.

Thus, the number of distinct isomorphism classes of these binary trees is at least 
\[
\frac{1}{t+1}\binom{2t}{t} \cdot \frac{1}{t2^t} \ge \frac{2^{2t-1}}{t(t+1)2^t\sqrt{t}} = \frac{2^{t-1}}{t^{3/2}(t+1)},
\]
since $\binom{2t}{t} \ge \frac{2^{2t-1}}{\sqrt{t}}$ for sufficiently large~$t$.
\end{proof}

Let $\mathcal{T}$ be a set of pairwise-non-isomorphic trees of order $t=\lfloor 3\log_2 n \rfloor$ and maximum degree at most~$3$ guaranteed by Proposition~\ref{prop:tree}. Then 
\[
|\mathcal{T}| \ge \frac{2^{t-1}}{t^{3/2}(t+1)} \ge n^2. 
\]

We now describe a randomized algorithm \texttt{GREEDY\_EMBED} that (if successful) embeds the trees from $\mathcal{T}$ into $G$ one by one. Our description (and analysis) of the algorithm will involve \emph{revealing} the pairs of vertices in $G=G(n, p)$. When we \emph{reveal} a pair $uv$ we are determining whether $uv$ is an edge (which of course happens with probability $p$). We initialize our algorithm with a set of $n$ vertices where no pairs have been revealed yet. As the algorithm runs we reveal pairs and embed trees from $\mc{T}$ using the edges we find. 

Let $T$ be a tree from $\mathcal{T}$. We will define an embedding $g_T: V(T) \rightarrow V(G)$. Clearly, we can order its vertices as $u_1,\dots,u_t$ in such a way that the subgraph of $T$ induced by $u_1,\dots,u_j$ is connected for all $1\le j\le t$.  Then set $g_T(u_1) := v_{1}$ for some vertex $v_{1}$ chosen uniformly at random. We call $v_1$ the \textit{active vertex} and all other vertices of $G$ are \textit{inactive}. We find the image of $u_2$ by revealing pairs $v_{1}v$ uniformly at random. We designate each step of this process as the revealing of another pair. Denote
$$
\Rev_j(u) = \lbrace v \in V(G): uv \text{ has been revealed in step at most } j\rbrace
$$  
and let $\rev_j(u) = |\Rev_j(u)|$.
For each new vertex $v$ added to $\rev_j(v_{1})$, we perform a Bernoulli trial with probability $p$ of successfully finding the edge $v_{1}v$ in $G$. As soon as we find one pair $v_{1}v$ that is an edge, we set $v_{2} := v$ and $g_T(u_2) := v_{2}$. At that point we will start trying to embed $u_3$, which (by our choice of the ordering $u_1, \ldots u_t$) is a neighbor of say $x \in \{u_1, u_2\}$. Now we say the \textit{active vertex} is $g_T(x)$. In general the active vertex is the image of the unique already-embedded neighbor of the vertex we are currently trying to embed. 
We continue this greedy process until $T$ is embedded. Next we choose another tree from $\mc{T}$ and repeat the embedding process, starting with a vertex chosen uniformly at random. We keep embedding trees this way as long as we can. The only way this can fail is if we reach a point where we have revealed all pairs containing the active vertex. The next proposition will imply that  we will be able to embed $\Omega(\frac{n^2p}{\log n})$ trees. 

\begin{prop}
Let $\epsilon>0$ be a sufficiently small constant.
Then in \texttt{GREEDY\_EMBED} on $G$, every vertex $v$ has the following a.a.s.:
\begin{enumerate}[label=\rmlabel]
\item \label{prop1} we find $(1+o(1))\eps n^2 p$ edges in $G$ after $\eps n^2$ steps, and
\item \label{prop2} $\rev_{\epsilon n^2}(v) \le 1000\epsilon n$.
\end{enumerate}
\end{prop}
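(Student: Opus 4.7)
For part~(i), note that \texttt{GREEDY\_EMBED} reveals each pair of vertices at most once, so the first $\epsilon n^2$ steps perform $\epsilon n^2$ distinct queries whose outcomes are independent $\mathrm{Bernoulli}(p)$ trials (the edges of $G(n,p)$ being mutually independent). Hence the number of edges found is distributed as $\bin(\epsilon n^2, p)$, whose mean $\epsilon n^2 p \ge \epsilon C n \log^2 n$ tends to infinity, and Chernoff's bound yields $(1+o(1))\epsilon n^2 p$ edges a.a.s.

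For part~(ii), fix $v\in V(G)$ and decompose $\rev_{\epsilon n^2}(v) = X_v + Y_v$, where $X_v$ counts steps $j\le\epsilon n^2$ at which $v$ is the active vertex and $Y_v$ counts steps at which $v$ is the target while the active vertex differs from $v$. The plan is to show $X_v, Y_v = O(\epsilon n)$ each with failure probability $n^{-\omega(1)}$ and then union-bound over the $n$ vertices. Since bounding $Y_v$ would itself rely on a uniform bound on $\rev$, I would introduce the stopping time $\tau := \min\{j : \exists u \text{ with } \rev_j(u) > 1000\epsilon n\}$ and carry out all estimates on the process up to $\tau \wedge \epsilon n^2$, with the aim of concluding $\tau > \epsilon n^2$ a.a.s.

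For $Y_v$: whenever $j \le \tau$ and the active vertex $u$ at step $j$ differs from $v$, the target is uniform on $V\setminus \Rev_{j-1}(u)\setminus\{u\}$, a set of size at least $n - 1000\epsilon n - 1 > n/2$ for small $\epsilon$, so the conditional probability of hitting $v$ is at most $2/n$. Thus the stopped $Y_v$ is stochastically dominated by $\bin(\epsilon n^2, 2/n)$, whose mean is $2\epsilon n$, and Chernoff gives $Y_v \le 10\epsilon n$ with failure probability $n^{-\omega(1)}$. For $X_v$: decompose $X_v = \sum_{i=1}^{K_v} L_i$, where $K_v$ is the number of distinct activations of $v$ and $L_i \sim \mathrm{Geom}(p)$ is the length of the $i$-th activation, the $L_i$ being independent by the strong Markov property. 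Since trees in $\mathcal{T}$ have maximum degree~$3$, each placement of $v$ as an image of a tree vertex spawns at most $3$ activations, so $K_v \le 3 P_v$, where $P_v$ counts placements of $v$. A placement of $v$ arises either as a root choice of a new tree (contributing $O(\epsilon n p / \log n)$ in expectation, since by part~(i) the number of trees started is $O(\epsilon n^2 p / \log n)$ and each root is chosen uniformly on $V(G)$) or from a successful reveal with target $v$ (stochastically dominated by $\bin(Y_v, p) = O(\epsilon n p)$ a.a.s.\ using the bound on $Y_v$), so $K_v = O(\epsilon n p)$ a.a.s. A Chernoff-type tail bound for sums of independent geometrics then yields $X_v = O(\epsilon n)$ with failure probability $e^{-\Omega(\epsilon n p)} = n^{-\omega(1)}$, since $\epsilon n p \ge \epsilon C \log^2 n$. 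A union bound over the $n$ vertices then gives $\tau > \epsilon n^2$ a.a.s., establishing~(ii).

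The main obstacle is the circular dependence between bounding $\rev$ and bounding $Y_v$; the stopping-time device resolves it, while the strong Markov property applied to each activation of $v$ ensures that the $L_i$ are independent geometrics and lets us conclude via a negative-binomial Chernoff estimate.
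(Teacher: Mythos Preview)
Your proposal is correct and follows essentially the same strategy as the paper's proof: decompose $\rev_{\eps n^2}(v)$ into reveals while $v$ is active ($X_v=\rev^+$) and while $v$ is inactive ($Y_v=\rev^-$), bound the number of activations of $v$, and control each piece by a Chernoff-type estimate, resolving the circular dependence by conditioning on a good event. The differences are cosmetic: where you introduce a stopping time $\tau$, the paper conditions on events $\mc{E}(i)=\{\rev_i^\pm(v)\le 500\eps n\ \forall v\}$ and extends past a failure via a ``phantom process'' with activation probability $p/(n-1000\eps n)$; and where you bound $X_v$ as a sum of independent geometrics, the paper argues equivalently that $500\eps n$ Bernoulli$(p)$ trials produce the at most $9\eps np$ successes needed across all activations of $v$. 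One small point worth making explicit (the paper does): the $\bin(\eps n^2,p)$ distribution in~(i) presupposes the process survives $\eps n^2$ steps, so (i) really follows from~(ii).
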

Indeed, this proposition tells us that we will be able to continue the embedding process for $\epsilon n^2$ steps. 
In addition, this tells us that we may embed $(1+o(1))\epsilon n^2 p/(t-1) = \Omega(\frac{n^2p}{\log n})$ trees. To obtain the lower bound on $\tau(G)$, we color each of these trees with a different color and the remaining edges with one extra color. The trees were chosen to be pairwise-non-isomorphic, and the other color class a.a.s. has about $\left( \binom n2 - \eps n^2\right) p$ edges which is way too many to be isomorphic to any of the trees. Thus we are done with the lower bound once we prove the proposition. 

\begin{proof}
Assuming the process does not fail in the first $\eps n^2$ steps, the number of edges found during those steps is distributed as $\bin(\eps n^2, p)$. An easy application of the Chernoff bound gives that this random variable is a.a.s. $(1+o(1))\eps n^2 p$. Assuming \ref{prop2} holds, the process does not fail in the first $\eps n^2$ steps. Thus \ref{prop1} will follow from \ref{prop2}.

We turn to proving \ref{prop2}. Fix $\eps = 1/10000$ and $C = 4 / \eps$. We define the following:
$$
\Rev_j^+(v) = \lbrace w \in V(G): vw \text{ has been revealed in step at most $j$ while $v$ is active}\rbrace
$$
$$
\Rev_j^-(v) = \lbrace w \in V(G): vw \text{ has been revealed in step at most $j$ while $v$ is inactive}\rbrace.
$$
Similarly, we let $\rev_j^{-}(v) = |\Rev_j^{-}(v)|$ and $\rev_j^{+}(v) = |\Rev_j^{-}(v)|$.
Thus 
$$
\rev_j(v) = \rev_j^-(v) + \rev_j^+(v).
$$
For a fixed vertex $v$, we would like to bound the number of times $v$ becomes active. $v$ can become active in two ways: $v$ could be the first vertex of a new tree, or $v$ could be some other vertex in a tree. The first vertex of each tree is chosen uniformly at random (independent of everything that has happened in the process previously), so we might as well make those choices before the process runs. We already know that a.a.s. we will find at most $(1+o(1))\eps n^2 p \le 2 \eps n^2 p$ edges (``at most'' since the process might fail early), and so we will embed at most $2 \eps n^2 p /(t-1) \le b:= 2 \eps n^2 p / (5 \log n)$ trees. So we will choose $b$ vertices, say $w_1, \ldots w_b$ independently and uniformly at random (and with replacement). $w_k$ will be the first vertex of the $k$th tree (assuming the process makes it to the $k$th tree before step $\eps n^2$. We chose more vertices $w_k$ than we will need but this is OK). The number of times our fixed vertex $v$ is chosen as some vertex $w_k$ is distributed as $\bin(b, 1/n)$ which has expectation  $\frac{b}{n} = 2 \eps n p / (5 \log n)$. Using Chernoff, the probability that $v$ is chosen here $\eps np / \log n$ times (i.e. more than twice the expected number of times) is at most $\exp\rbrac{-\frac 13 \cdot \frac{2 \eps n p}{ 5 \log n}} \le \exp\rbrac{-\frac {2}{15} C \log n} =o\rbrac{\frac 1n}$ where the last equality easily follows from our choice of $C$. Thus by the union bound over all vertices $v$, a.a.s. each vertex appears at most $\eps np / \log n$ times as the first vertex of a tree. Thus we have a bound on one of the ways $v$ can become active this way. 

Now we bound the number of times $v$ becomes active in the other way (when we have already embedded at least one vertex of some tree, the current active vertex is say $u \neq v$, the process chooses the pair $uv$ to reveal, and an edge is found). To do this we will define some ``good events'' which we will use for the rest of the proof (we will not use all parts of this definition yet but we will later). For $1 \le i \le \eps n^2$, we define
\begin{itemize}
\item $\mathcal{E}_1(i)$ is the event that, for every vertex $v$, $\rev_i^+(v)\le 500 \eps n$
\item $\mathcal{E}_2(i)$ is the event that, for every vertex $v$, $\rev_i^-(v)\le 500 \eps n$
\item $\mathcal{E}(i) = \mathcal{E}_1(i) \cap \mathcal{E}_2(i)$.
\end{itemize}
(Note that we will be done with \ref{prop2} when we show that $\mathcal{E}(\eps n^2)$ holds a.a.s.) If $\mathcal{E}(i)$ holds and $v$ is not active, the probability that $v$ becomes the active vertex at step $i+1$ is at least $p/n$ and at most $p/(n-1000 \eps n)$. To handle the possibility that $\mathcal{E}(i)$ does not hold for some $i \le \eps n^2$, we can imagine running a ``phantom process'' which just chooses a sequence of vertices to call ``active'' at steps $i+1, \ldots \eps n^2$. In this phantom process we will say the probability that $v$ is the ``active'' vertex at any given step is $p/(n-1000 \eps n)$. The expected number of times $v$ becomes active this way  (either in the real process or the phantom process) is then between $\eps n p$ and $\eps n p / (1-1000 \eps)$. Chernoff gives us that a.a.s. each vertex $v$ becomes active this way at most $2 \eps np$ times. Combined with the other way to become active (in the previous paragraph), each vertex $v$ becomes active at most $3 \eps np$ times.

Now we show that $\mathcal{E}_1(\eps n^2)$ holds a.a.s. Since $v$ becomes active at most $3 \eps np$ times and each tree we embed has maximum degree 3, the process will need to find at most $9 \eps np$ edges emanating from $v$. The probability that out of $500 \eps n$ independent trials with success probability $p$, we find fewer than $9 \eps np$ successes is at most (using Chernoff) 
\[
2^{-9 \eps np}  = \exp\{-\Omega(\log^2 n)\} = o\rbrac{\frac 1n}.
\]
Now by the union bound over vertices $v$, $\mathcal{E}_1(\eps n^2)$ holds a.a.s.

We turn to showing that $\mathcal{E}_2(\eps n^2)$ holds a.a.s. If $\mathcal{E}_2(\eps n^2)$ fails (and $\mathcal{E}_1(\eps n^2)$ holds), then there exists a vertex $v$ and a step $j \le \eps n^2$ such that $\mc{E}(j-1)$ holds but $\mc{E}_2(j)$ fails, i.e. $\rev_j^-(v) \ge 500\eps n$. Thus, at step a step $i < j$ where $v$ is inactive, for some active vertex $w$ to pick the pair $wv$ to reveal has probability at most 
$$
\frac{1}{n-1-\rev_i(v)} \le \frac{1}{n-1-500\eps n}.
$$
Therefore, we may write
\begin{align*}
\Pr(\mathcal{E}_2^c(j) | \mathcal{E}(j-1)) &\le n\binom{j}{500\eps n} \left(\frac{1}{n-1-500\eps n}\right)^{500\eps n}
\le n\binom{\epsilon n^2}{500\eps n} \left(\frac{1}{n-1-500\eps n}\right)^{500\eps n}\\
&\le n\left( \frac{e\epsilon n}{500\eps}\right)^{500\eps n} \left( \frac{1}{(1-500\eps + 1/n)n}\right)^{500\eps n}
\le  n \left( \frac{e\epsilon}{500\eps(1-500\eps)} \right)^{500\eps n}
\end{align*}
and since $e\varepsilon/500\eps(1-500\eps) < 1$ for sufficiently small $\varepsilon$, we have
\[
\Pr(\mathcal{E}_2^c(\epsilon n^2)) \le \Pr\left( \exists j\le \epsilon n^2 : \mathcal{E}_2^c(j) | \mathcal{E}(j-1) \right) 
\le \epsilon n^3 \left( \frac{e\epsilon}{500\eps(1-500\eps)} \right)^{500\eps n} = o(1).
\]
\end{proof}

\section{Proof of Theorem~\ref{thm:main_sparse}}

\subsection{Upper bound}
The proof is basically the same as the proof of the upper bound of Theorem~\ref{thm:main}, since the inequality~\eqref{eq:ub} is also valid for $p=\Omega(\frac{1}{n})$.

\subsection{Lower bound}
First recall that a \emph{partition} of a positive integer $k$ is a way of writing $k$ as a sum of positive integers. Let $p(k)$ be the number of partitions of $k$. It is known (see, e.g,~\cite{A1988}) that as $k$ grows,
$\log(p(k)) =(1+o(1)) \pi\sqrt{2/3}\sqrt{k}> 2 \sqrt{k}$. Set $k=\lceil \log^2 n \rceil$ and observe that 
\[
p(k) \ge e^{2\sqrt{k}} = n^2.
\]
For each partition, $x_1+\dots+x_\ell = k$,  we define an unlabelled forest $F$ of $\ell$ paths with length $x_1,\dots, x_\ell$, respectively. Let $\mathcal{F}$ be the set of linear forests corresponding to $k=\lceil \log^2 n \rceil$. Hence, $|\mathcal{F}| \ge n^2$. Furthermore, observe that all graphs in $\mathcal{F}$ are pairwise non-isomorphic.

Our goal is to embed into $G$ at least $\Omega(\frac{n^2p}{\log^2n})$ different forests from $\mathcal{F}$. In order to do it, we find $\Omega(np)$ edge-disjoint paths, $P_1,\dots,P_m$, each of length $\Omega(n)$. The existence of such paths for $p$ satisfying $\Omega(\frac{\log n}{n}) = p \le 1$ follows, for example, from results about packing of Hamilton cycles in random graphs due to Frieze and Krivelevich~\cite{FK2012}, Knox, K\"{u}hn and Osthus~\cite{FKO2015}, and Krivelevich and Samotij~\cite{KS2012}. However, for smaller $p$ we need to provide a different argument.

\begin{prop}
There is an absolute constant $C>0$ such that if $p\ge \frac{C}{n}$ and $G=G(n,p)$, then  a.a.s.~$G$ contains $\Omega(np)$ edge-disjoint paths, each of length $\Omega(n)$.
\end{prop}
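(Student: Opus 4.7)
My plan is to use a random edge-partitioning argument that reduces finding many edge-disjoint long paths in $G(n,p)$ to finding a single long path in each of many sparse random subgraphs, and then to apply an exponentially concentrated DFS bound and take a union bound over the parts.

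First I would fix a large constant $K$ (chosen during the DFS step below), assume $C\ge K^2$ so that $p\ge K^2/n$, and set $k:=\lfloor np/K\rfloor=\Omega(np)$. Consider the two-stage random experiment: sample $G\sim G(n,p)$ and then color each edge of $G$ with a color in $\{1,\dots,k\}$, independently and uniformly at random. The $i$-th color class $G_i$ is marginally distributed as $G(n,p/k)$, which stochastically contains $G(n,K/n)$. These marginals are not jointly independent, but this is irrelevant for a union bound.

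The key step is to show that for $K$ sufficiently large there exist constants $\alpha=\alpha(K)\in(0,1)$ and $\beta=\beta(K)>0$ such that
\[
\Pr\bigl[\,G(n,K/n)\text{ has no path of length }\alpha n\,\bigr]\le e^{-\beta n}.
\]
This follows from a standard DFS argument: if DFS on $G(n,K/n)$ never produces a stack of size $\alpha n$, then there is a moment at which the vertex set is partitioned as $V=A\dcup B\dcup C$ with $|B|<\alpha n$ and every pair in $A\times C$ already revealed to be a non-edge. Since there are at most $4^n$ such partitions, and each contributes probability at most $(1-K/n)^{(1-\alpha)^2 n^2/4}\le\exp(-K(1-\alpha)^2 n/4)$, the overall failure probability is bounded by $\exp(n[\log 4-K(1-\alpha)^2/4])$, which is $e^{-\Omega(n)}$ once $K$ is taken sufficiently large.

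Combining the above via the union bound over the $k\le n$ color classes, the probability (in the joint two-stage distribution) that some $G_i$ fails to contain a path of length $\alpha n$ is at most $k\cdot e^{-\beta n}=o(1)$. Marginalizing out the coloring, I conclude that a.a.s.\ $G\sim G(n,p)$ admits an edge-partition into $k=\Omega(np)$ subgraphs, each containing a path of length $\Omega(n)$; since the subgraphs are edge-disjoint by construction, so are these paths. The main obstacle is obtaining the exponential (rather than merely a.a.s.) bound in the DFS step so that the union bound can absorb the $\Theta(np)$ color classes; this is precisely what forces the restriction to $p\ge C/n$ for large enough~$C$.
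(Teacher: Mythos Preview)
Your proof is correct and takes a genuinely different route from the paper's. The paper argues \emph{greedily}: it first shows via Chernoff and a union bound that a.a.s.\ in $G(n,p)$ every pair of disjoint sets $U,W$ of size $\tfrac{1-c}{2}n$ spans at least $\tfrac12\bigl(\tfrac{1-c}{2}\bigr)^2 pn^2$ edges, and then peels off paths $P_1,\dots,P_m$ (with $m=np$) one at a time via DFS, observing that after removing at most $icn\le cpn^2$ edges each such cut still satisfies $e_{G_{i+1}}(U,W)\ge \tfrac12\bigl(\tfrac{1-c}{2}\bigr)^2 pn^2 - cpn^2>0$, so DFS on the depleted graph continues to succeed. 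Your approach instead does all the work in parallel: you split the edges randomly into $\Theta(np)$ classes, reduce to a single long-path problem in each $G(n,K/n)$, and absorb the $\Theta(np)\le n$ classes with an $e^{-\Omega(n)}$ tail on the DFS failure event. Both arguments rest on the same deterministic DFS fact (no path of length $\alpha n$ forces a balanced edge-free cut); the difference is where the slack is spent. The paper tracks the depletion of cut edges explicitly and needs only one a.a.s.\ event on $G$, with constants governed by $\tfrac{(1-c)^2}{8}>c$; your random-splitting argument is cleaner and avoids the bookkeeping of the iterative deletion, but in exchange requires the stronger exponential bound in each part, which is exactly what forces $K$ (and hence $C$) above the threshold $4\log 4/(1-\alpha)^2$.
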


\begin{proof}
The proof is based on the depth first search algorithm (DFS) and it is a variant of the previous approach taken by Dudek and Pra\l{}at~\cite{DP2015}. Ben-Eliezer, Krivelevich and Sudakov~\cite{BKS2012} were the first to successfully apply the DFS algorithm for random graphs.

Let $G=(V,E)=G(n,p)$, where $np \ge C$ for some sufficiently large constant $C>0$. Set $m=np$. We will find greedily edge-disjoint paths $P_1,\dots,P_m$ each of length $cn$ for some sufficiently small constant $c>0$. Assume that we already found $P_1,\dots,P_{i}$ for some $i\le m-1$. Now we show how to find a path $P_{i+1}$ which is edge-disjoint with $P_1,\dots,P_{i}$. 

We perform the following algorithm on $G_{i+1}=(V,E_{i+1})$, where $E_{i+1} = E \setminus (E(P_1\cup\dots\cup E(P_i))$. Assume by contradiction that there is no path of length $cn$ in~$G_{i+1}$.

Let $v_1$ be an arbitrary vertex of $G_{i+1}$, let $P=(v_1)$, $U = V(G) \setminus \{v_1\}$, and $W = \emptyset$. If there is an edge from $v_1$ to $U$ (say from $v_1$ to $v_2$), we extend the path as $P=(v_1,v_2)$ and remove $v_2$ from $U$. We continue extending the path $P$ this way for as long as possible. It might happen that we reach the point of the process in which $P$ cannot be extended, that is, there is a path from $v_1$ to $v_k$ (for some $k \le cn$) and there is no edge from $v_k$ to $U$. If this is the case, $v_k$ is moved to $W$ and we try to continue extending the path from $v_{k-1}$, perhaps reaching another critical point in which another vertex will be moved to $W$, etc. If $P$ is reduced to a single vertex $v_1$ and no edge to $U$ is found, we move $v_1$ to $W$ and simply restart the process from another vertex from $U$, again arbitrarily chosen. 

An obvious but important observation is that during this algorithm there is never an edge between $U$ and $W$, i.e., $e_{G_{i+1}}(U,W)=0$. Moreover, in each step of the process, the size of $U$ decreases by 1 or the size of $W$ increases by 1. Hence, at some point of the process both $U$ and $W$ must have equal size, namely, $|U|=|W|=(n-cn)/2 = \frac{1-c}{2}n$ and $e_{G_{i+1}}(U,W)=0$. We show that this cannot happen.

We will argue now that a.a.s.~$e(U,W)>0$. First observe that in $G=G(n,p)$ we have
\[
\E[e_G(U,W)] = \left(\frac{1-c}{2}n\right)^2 p = \left(\frac{1-c}{2}\right)^2 pn^2 =:\mu
\]
and Chernoff's bound together with the union bound yields
\begin{align*}
\Pr\left(\exists\,U,W: e_G(U,W) \le \frac{1}{2}\mu  \right)
&\le \binom{n}{\frac{1-c}{2}n}^2 \exp\left\{-\frac{\mu}{8}\right\}
\le \left(\frac{2e}{1-c}\right)^{(1-c)n} \exp\left\{-\frac{\mu}{8}\right\}\\
&= \exp\left\{(1-c)n\left( \log\left(\frac{2e}{1-c}\right)  - \frac{1-c}{32} pn \right)  \right\} = o(1),
\end{align*}
for sufficiently small $c$, since $pn\ge C$ and $C$ is arbitrarily large. Consequently, in $G_{i+1}$ the number of edges between $U$ and $W$ satisfies a.a.s.
\[
e_{G_{i+1}}(U,W) \ge \frac{1}{2} \mu - icn
\ge \frac{1}{2} \left(\frac{1-c}{2}\right)^2 pn^2 - cmn
= pn^2\left( \frac{(1-c)^2}{8} - c\right) > 0
\]
for sufficiently small $c$, which is a contradiction, since $e_{G_{i+1}}(U,W)=0$. 
\end{proof}

Now we embed as many as possible forests from $\mathcal{F}$ into each $P_i=(v_{i,1}, v_{i,2},\dots,v_{i, cn+1})$. Let $F$ be the first linear forest that consists of paths $Q_1,\dots,Q_\ell$ of length $x_1,\dots,x_\ell$, respectively. Clearly, $|V(F)| = (x_1+1)+\dots+(x_\ell+1) = k+\ell \le 2k$.
We find a copy $F$ in $P_i$ following the order of vertices of $P_i$. That means that vertices $(v_{i,1},\dots,v_{i,x_1})$ will induce a copy of $Q_1$, vertices $(v_{i,x_1+1},\dots,v_{i,x_1+x_2})$ a copy of $Q_2$, etc. Once $F$ is embedded we embed a new forest starting at vertex $v_{i,k+\ell+1}$.
Since no forest in $\mathcal{F}$ has more than $2k$ vertices, we will be able to pack $\Omega(\frac{n}{k}) = \Omega(\frac{n}{\log^2 n})$ forests on each $P_i$ yielding altogether $\Omega(np)\cdot\Omega(\frac{n}{\log^2 n}) = \Omega(\frac{n^2p}{\log^2 n})$ embedded forests into $G$.

Finally, we color each embedded forest with a different color and the leftover edges $E(G)\setminus (E(P_1)\cup\dots\cup E(P_m))$ get an extra new color. To see that no two color classes are isomorphic it suffices to observe that the number of edges in $|E(G)\setminus (E(P_1)\cup\dots\cup E(P_m))| = \Omega(\frac{n^2p}{\log^2 n}) \gg \log^2 n$.

\section{Proof of Theorem~\ref{thm:main_very_sparse}}

\subsection{Some preliminary remarks}
Assume that $p$ satisfies
\[
n^{-\frac{k}{k-1}} \ll p \ll n^{-\frac{k+1}{k}}.
\]
It is well known that a.a.s.~the number of components (i.e. trees) of order $i\ge 2$ is $\Theta(n^ip^{i-1})$ and there is no component of order~$k+1$. 
When $k=2$, $\tau(G) = O(\sqrt{m}) = O(np^{1/2})$. Indeed, since the graph is is just a matching we form non-isomorphic color classes by letting each color class have a different number of edges.

Now we show that for $k=3$, we also have $\tau(G) = O(np^{1/2})$. Each color class can be represented as a vector $(i,j)$, where $i$ is the number of $P_2$ and $j$ is the number of $P_3$ (for this range of $p$ these two types are the only components). We consider two types: $(*,0)$ and $(*, \ge\!\!1)$, where ``$*$'' denotes any non-negative integer and ``$\ge\!\!1$'' any positive integer.
The first type can have at most $O(np^{1/2})$ color classes (as in case $k=2$). The second type consists of color classes that contain at least one copy of $P_3$. Let $g(i)$ be the number of color classes of the form $(i,\ge\!\!1)$. Now notice that the number of copies of $P_3$ associated with color classes $(i,\ge\!\!1)$ is at least $1+2+\cdots+g(i) \ge \frac 12 g(i)^2$. Therefore, since the total number of $P_3$ is $O(n^3p^2)$, we obtain the following bound
\[
O(n^3p^2) = \frac12 [ g(1)^2+\dots + g(\ell)^2] \ge \frac12 [ g(1)+\dots + g(\ell)]
\]
Hence, the number of color classes is at most
\[
O(np^{1/2}) + g(1)+\dots + g(\ell) = O(np^{1/2} + n^3p^2) = O(np^{1/2}),
\]
since $n^3p^2 \ll np^{1/2}$ due to $p \ll n^{-4/3}$.

For $k\ge 4$ the situation is much more complicated.

\subsection{Motivation}

The motivation for our proof of the upper bound comes from the $\ell = 3$ case (this occurs for the first time when $k=4$). This can be solved using the method of Lagrange multipliers. In our discussion below, we provide a sketch of the procedure and how it motivates our rigorous argument, with the aim of providing insight into the techniques used for larger values of $k$. 

When $\ell =3$, we consider color classes of the form $(*,\ge\!\!1,0)$, where the first coordinate corresponds to the number of $P_2$ and the second to the number of $P_3$. One can check that the color classes of such form dominate the rest ones (using an argument similar to that used in the $k=3$ case). 
We will use $x_t$ to represent the number of $(t,\ge\!\!1,0)$ classes within a given edge coloring. This will lead to the following optimization problem:
Maximize 
\[
\sum_{t=0}^m x_t
\]
subject to 
\[
\sum_{t=0}^m tx_t \le A = O(n^2 p), \qquad \sum_{t=0}^{m} x_t^2 \le B = O(n^3 p^2), \qquad x_t \ge 0
\]
for some positive numbers $A, B$ arising from constraints in $G(n,p)$ on the number of trees of order two and three in this sparse regime. 
In particular, we get the second constraint, since for any $t$, the number of $P_3$ associated with $(t, \ge\!\!1, 0)$ is at least $1+2+\cdots+x_t \ge \frac12x_t^2$, which must be at most $O(n^3p^2)$ when summing over all $t$.

The gradient of our objective function is $(1, \ldots, 1)$, while the gradients of our two constraint functions are $(0,1, 2, \ldots, \ell)$ and $(2x_0,2x_1, 2x_2, \ldots, 2x_\ell)$, respectively. So the method of Lagrange multipliers guarantees that there is an optimal solution such that
\[
(1, \ldots, 1) = \lambda_1 (0,1, 2, \ldots, \ell) + \lambda_2 (2x_0,2x_1, 2x_2, \ldots, 2x_\ell)
\]
for some constants $\lambda_1, \lambda_2$.
We remark that we can assume that $x_1\ge x_2\ldots\ge x_m$ using the rearrangement inequality (applied to the first constraint). 
Therefore, for each $t$ we have 
\[
x_t = \frac{1-\lambda_1 t}{2\lambda_2}.
\]
Thus, we have
$$
x_t = r - st \qquad \textrm{where}\qquad r :=  \frac{1}{2\lambda_2} \qquad \textrm{and}\qquad s := \frac{\lambda_1}{2\lambda_2}.
$$
Since $x_0 \ge x_1 \ge x_2 \ldots \ge x_m \ge 0$ we must have that $s \ge 0$ and $r \ge s m$. Now we have
\[
A = \sum_{t=0}^m t(r-st) \sim \frac12 rm^2 - \frac13 sm^3 \ge \frac 16 rm^2,
\]
\[
B= \sum_{t=0}^m (r-st)^2 \sim r^2 m - rsm^2 + \frac13 s^2 m^3 = \frac{\rbrac{3 - 3\cdot \frac{s m}{r} + \rbrac{\frac {sm}{r}}^2}r^2 m}{3} \ge \frac14 r^2 m
\]
(by minimizing the quadratic in the numerator),
and our objective function is
\[
\sum_{t=0}^m (r-st) \lesssim rm = O\rbrac{(AB)^{1/3}} = O\rbrac{n^{5/3}p},
\]
which is the desired upper bound. Here the important observation is that $x_t$ takes the form $r -s t$ for some positive constants $r,s$, in this case motivated by an application of the method of Lagrange multiplies. We will use this idea in the more general proof below.

\subsection{Upper bound} Let $k\ge 2$ be given and set
\[
\ell = \left\lfloor\frac{\sqrt{8k-7}+1}{2}\right\rfloor.
\]

Notice that each color class can be represented as a vector $(t_1,t_2,\dots,t_{k-1})$, where $t_i$ denotes the number of isolated trees of order $i+1$. Two such color classes would only be isomorphic if they corresponded to exactly the same choice of the numbers~$t_i$. We also assume that not all $t_i$ are equal to zero simultaneously. We will actually do this in such a way that we have exactly one color class for each choice of the $t_i$. Technically speaking one vector $(t_1,t_2,\dots,t_{k-1})$ can represent  two different non-isomorphic classes if for some $i$, $t_i$ trees in one class are not isomorphic to $t_i$ trees in the second class. But since the number of non-isomorphic trees of order $i$  is at most $i^{i-2}$, we get that one vector can represent at most $O_k(1)$ color classes.

First we show that the number of color classes of the form $(\underbrace{*,\dots,*}_{\ell-2 \text{ times}},\ge\!\!1,\underbrace{0,\dots,0}_{k-\ell \text{ times}})$ is significantly larger than the number of the remaining classes (satisfying the certain constraints). In other words, this will imply that trees of order at least $\ell+1$ have negligible contributions to the number of non-isomorphic color classes.  

Consider color classes of the form $(\underbrace{*,\dots,*}_{i-1\text{ times}},\ge 1,0,\dots,0)$ for $\ell \le i \le k-2$. 
Define $g(t_1,\dots,t_{i-1})$ as the number of classes $(t_1,\dots,t_{i-1},\ge 1,0\dots,0)$. Now notice that the number of trees of order $i+1$ satisfies
\[
O(n^{i+1}p^i) = \sum_{t_1,\dots,t_{i-1}} g(t_1,\dots,t_{i-1})^2
\ge \sum_{t_1,\dots,t_{i-1}} g(t_1,\dots,t_{i-1}).
\]
(Recall that the $i$th coordinate corresponds to the number of isolated trees of order $i+1$.)

We will show that $n^{i+1}p^i \ll n^{\frac{(\ell+2)(\ell-1)}{2\ell}} p^{\frac{\ell-1}{2}}$ for any $\ell \le i \le k-2$. Since the L-H-S is maximized for $i=\ell$, it suffices to  show that $n^{\ell+1}p^\ell \ll n^{\frac{(\ell+2)(\ell-1)}{2\ell}} p^{\frac{\ell-1}{2}}$, which is equivalent to
$n^{\frac{\ell^2+\ell+2}{2\ell}} p^{\frac{\ell+1}{2}} \ll 1$. Furthermore, since $p \ll n^{-\frac{k+1}{k}}$, it is enough to show that 
$n^{\frac{\ell^2+\ell+2}{2\ell}} n^{-\frac{(k+1)(\ell+1)}{2k}} \le 1$,
which is equivalent to $\frac{\ell^2+\ell+2}{2\ell} -\frac{(k+1)(\ell+1)}{2k} \le 0$
and finally to $-\ell^2-\ell+2k\le 0$. The latter follows from the following claim.
\begin{claim}
Let $k\ge 2$ be an integer and $\ell = \lfloor\frac{\sqrt{8k-7}+1}{2}\rfloor$. Then,
\[
-\ell^2-\ell+2k\le 0.
\]
\end{claim}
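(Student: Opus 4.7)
The plan is to exploit the floor in the definition of $\ell$ to get a strict real inequality, and then use a parity observation to bridge the gap to the desired (slightly stronger) integer inequality.

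First, I would unwind the floor. Since $\ell=\lfloor \frac{1+\sqrt{8k-7}}{2}\rfloor$, we have $\ell+1 > \frac{1+\sqrt{8k-7}}{2}$, equivalently $2\ell+1 > \sqrt{8k-7}$. Both sides are nonnegative (for $k\ge 2$), so squaring is safe and yields $4\ell^2+4\ell+1 > 8k-7$, i.e., $\ell(\ell+1) > 2k-2$.

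At this point a naive bound gives only $-\ell^2-\ell+2k < 2$, which is not quite what is claimed. The key observation that closes the gap is a parity argument: $\ell(\ell+1)$ is the product of two consecutive integers and hence even, and $2k-2$ is also even. A strict inequality between two even integers must differ by at least $2$, so $\ell(\ell+1)\ge 2k$, which is exactly $-\ell^2-\ell+2k\le 0$.

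I expect no real obstacle; the only subtle point is noticing the parity upgrade, without which one would be tempted to tighten the definition of $\ell$ unnecessarily. As a sanity check, for $k=3$ the definition gives $\ell=2$ and $\ell(\ell+1)=6=2k$, showing that the inequality can be tight and confirming that the parity step is doing genuine work rather than being wasteful.
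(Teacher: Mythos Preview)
Your proof is correct and is in fact cleaner than the paper's. The paper proceeds differently: it identifies the positive root of $-x^{2}-x+2k$ as $\tfrac{-1+\sqrt{8k+1}}{2}$ and then shows $\ell \ge \tfrac{-1+\sqrt{8k+1}}{2}$ via a case split on whether $k$ is a triangular number $\tfrac{a(a+1)}{2}$ or lies strictly between two consecutive triangular numbers, computing $\ell$ explicitly in each case. Your route instead squares the floor inequality $2\ell+1>\sqrt{8k-7}$ to obtain $\ell(\ell+1)>2k-2$ and then uses the parity observation that both sides are even to upgrade the strict inequality by $2$, landing exactly on $\ell(\ell+1)\ge 2k$. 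This avoids the case analysis entirely; the trade-off is that the parity step is a small trick one has to spot, whereas the paper's argument is more mechanical but longer.
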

\begin{proof}
First observe that the roots of the quadratic function $-x^2-x+2k$ are $-\frac{1}{2} \pm \frac{\sqrt{8k+1}}{2}$.
Thus, it suffices to show that
\[
-\frac{1}{2} + \frac{\sqrt{8k+1}}{2} \le \ell.
\]

We will consider two cases. For the first one, assume that $k=\frac{a(a+1)}{2}$ for some positive integer $a$. Hence, 
\[
-\frac{1}{2} + \frac{\sqrt{8k+1}}{2} = -\frac{1}{2} + \frac{\sqrt{(a+1)^2}}{2} = a.
\]
Also since $4a^2+4a-7 \ge (2a-1)^2$, we get
\[
\ell = \left\lfloor\frac{\sqrt{8k-7}+1}{2}\right\rfloor = \left\lfloor\frac{\sqrt{4a^2+4a-7}+1}{2}\right\rfloor
\ge \left\lfloor\frac{\sqrt{(2a-1)^2}+1}{2}\right\rfloor = a,
\]
as required.

Now assume that $\frac{a(a+1)}{2} + 1\le k \le \frac{(a+1)(a+2)}{2}-1$. Thus,
\[
\ell = \left\lfloor\frac{\sqrt{8k-7}+1}{2}\right\rfloor \ge 
\left\lfloor\frac{\sqrt{8(\frac{a(a+1)}{2} + 1)-7     }+1}{2}\right\rfloor
= \left\lfloor\frac{\sqrt{(2a+1)^2}+1}{2}\right\rfloor = a+1
\]
and
\[
-\frac{1}{2} + \frac{\sqrt{8k+1}}{2} \le 
-\frac{1}{2} + \frac{\sqrt{8\frac{(a+1)(a+2)}{2}+1}}{2}
= -\frac{1}{2} + \frac{\sqrt{(2a+3)^2}}{2} = a+1,
\]
finishing the proof of the claim.
\end{proof}

For fixed $k \ge 4$, it suffices to show that $|\mathcal C| = O(n^{\frac{(\ell+2)(\ell -1)}{2\ell}} p^{\frac{\ell -1}{2}})$.  
Let $x: \mathbb R^{\ell - 2} \rightarrow \mathbb R$, denoted  $x(t_1, \ldots, t_{\ell-2}):= x_{t_1, \ldots, t_{\ell-2}}$, be  the number of classes $$(t_1, t_2,\ldots, t_{\ell-2}, \ge 1, 0, \ldots,0, 0),$$ where there are $k-\ell$ zeros at the end. 
If we let $t = (t_1, t_2, \ldots, t_{\ell-2})$ and define
$$
X := \{t \in \mathbb N^{\ell-2}: 0 \le t_i \le m \textrm{ for all } i\},
$$
then we wish to maximize
$$
F := \sum_{t\in X} x_{t},
$$
subject to the constraints
\begin{align*}
&\sum_{t \in X}t_ix_{t} \le A_i = O(n^{i+1}p^{i}) \textrm{ for all } 1 \le i \le \ell - 2,\\
&\sum_{t \in X} x_{t}^2 \le B = O(n^{\ell}p^{\ell-1}),\textrm{ and}\\
&x_t \ge 0.
\end{align*}
Define
$$
r_i = (np)^{\ell-1-i}\textrm{ for all } 1 \le i \le \ell - 2 \qquad \textrm{and} \qquad s = n^{(\ell^2-\ell+2)/(2\ell)}p^{(\ell-1)/2}.
$$
Then the following estimate holds: 
\begin{align}
B + 2\sum_{i=1}^{\ell-2} r_iA_i - 2s F 
&\ge \sum_{t\in X} \left(x_t^2 + 2\sum_{i=1}^{\ell-2} r_it_ix_t -2sx_t\right)\nonumber\\
&= \sum_{t\in X} \left(x_t - \left(s-\sum_{i=1}^{\ell-2} r_it_i\right)\right)^2 - \sum_{t\in X} \left(s-\sum_{i=1}^{\ell-2} r_it_i\right)^2. \label{eq:tX}
\end{align}
Let 
$$
S := \left\{t\in X: s-\sum_{i=1}^{\ell-2} r_it_i \ge 0 \right\} \subseteq [0, s/r_1] \times  [0, s/r_2] \times \ldots \times [0, s/r_{\ell - 2}]:=Q.
$$
The last inclusion is clear since for all $i$ if we have $t\in S$ then $s \ge \sum_{i=1}^{\ell-2} r_it_i \ge r_it_i$, so $0 \le t_i \le s/r_i$ for all $i$.
Now since the terms of the first sum in \eqref{eq:tX} are nonnegative, we have 
$$
\sum_{t\in X} \left(x_t - \left(s-\sum_{i=1}^{\ell-2} r_it_i\right)\right)^2 \ge \sum_{t\in X\setminus S} \left(x_t - \left(s-\sum_{i=1}^{\ell-2} r_it_i\right)\right)^2 .
$$
Note that when $t \in X\setminus S$ then $s-\sum_{i=1}^{\ell-2} r_it_i \le 0$ and $x_t \ge 0$, so we have 
$$
\left(x_t - \left(s-\sum_{i=1}^{\ell-2} r_it_i\right)\right)^2 \ge \left(s-\sum_{i=1}^{\ell-2} r_it_i\right)^2 
$$
and
\begin{align}
\sum_{t \in X \setminus S } & \left(x_t - \left(s-\sum_{i=1}^{\ell-2} r_it_i\right)\right)^2 - \sum_{t\in X} \left(s-\sum_{i=1}^{\ell-2} r_it_i\right)^2\nonumber\\
&\ge \sum_{t \in X \setminus S }  \left(s-\sum_{i=1}^{\ell-2} r_it_i\right)^2 - \sum_{t\in X} \left(s-\sum_{i=1}^{\ell-2} r_it_i\right)^2\nonumber\\
&\ge - \sum_{t \in S }  \left(s-\sum_{i=1}^{\ell-2} r_it_i\right)^2
\ge - \sum_{t \in Q }  s^2 \nonumber \\
&= -s^2 \prod_{i=1}^{\ell-2} \frac{s}{r_i} \ge -\ell^2  s^\ell (np)^{-(\ell-1)(\ell-2)/2}. \label{eq:tX-simp}
\end{align}
Then \eqref{eq:tX} and \eqref{eq:tX-simp} imply that
$$
F \le\frac{B + 2\sum_{i=1}^{\ell-2} r_iA_i + \ell^2  s^\ell (np)^{-(\ell-1)(\ell-2)/2} }{2s} =  \frac{O(n^{\ell}p^{\ell-1})}{2n^{(\ell^2-\ell+2)/(2\ell)}p^{(\ell-1)/2}} = O\left(n^{\frac{(\ell+2)(\ell -1)}{2\ell}} p^{\frac{\ell -1}{2}}\right),
$$
as desired.

\subsection{Lower bound}
By using Chebyshev's inequality one can show that the number of isolated paths $P_i$ is at least $c_i n^i p^{i-1}$ for some positive constant $c_i$, where $2\le i\le k$. 

We will consider the following set $\mathcal{C}$ of color classes, each of the form
\[
\{t_2P_2,t_3P_3,\dots,t_\ell P_\ell\},
\]
where $0\le t_i \le \xi_i$,
\[
\ell = \left\lfloor\frac{\sqrt{8k-7}+1}{2}\right\rfloor,
\]
and
\[
\xi_i +1= \frac{c_in^ip^{i-1}}{n^{\frac{(\ell+2)(\ell-1)}{2\ell}} p^{\frac{\ell-1}{2}}}.
\] 
In other words, a color class will be the vertex-disjoint union of paths. Two such color classes would only be isomorphic if they corresponded to exactly the same choice of the numbers~$t_i$. We also assume that no all $t_i$ are equal to zero simultaneously. We will actually do this in such a way that we have exactly one color class for each choice of the $t_i$. 

First observe that due to our choice of $\ell$, each $\xi_i\gg 1$, that means our bounds are well-defined. Indeed,
\begin{align*}
\xi_i +1 \ge \xi_\ell+1 = \frac{c_\ell n^\ell p^{\ell-1}}{n^{\frac{(\ell+2)(\ell-1)}{2\ell}} p^{\frac{\ell-1}{2}}}
&= c_\ell n^{\ell - \frac{(\ell+2)(\ell-1)}{2\ell}} p^{\frac{\ell-1}{2}}\\
&\gg c_\ell n^{\ell - \frac{(\ell+2)(\ell-1)}{2\ell}} n^{-\frac{k}{k-1}\cdot \frac{\ell-1}{2}}
=c_\ell n^{\frac{1}{\ell}-\frac{\ell-1}{2(k-1)}}.
\end{align*}
Now observe that $h(\ell) = \frac{1}{\ell}-\frac{\ell-1}{2(k-1)}$ is a decreasing function for $2\le \ell\le k$ and the only positive root is $\frac{\sqrt{8k-7}+1}{2}$. Thus, $\xi_i\gg 1$ for any $2\le i\le \ell$.

Set $c=\prod_{i=2}^\ell c_i$. We may assume that $c\le 1$. Clearly,
\[
|\mathcal{C}| = \prod_{i=2}^\ell (\xi_i +1)-1 = \frac{ cn^{\frac{(\ell+2)(\ell-1)}{2}} p^{\frac{\ell(\ell-1)}{2}}  }{n^{\frac{(\ell+2)(\ell-1)^2}{2\ell}} p^{\frac{(\ell-1)^2}{2}}}-1
= cn^{\frac{(\ell+2)(\ell-1)}{2\ell}} p^{\frac{\ell-1}{2}}-1
=\Omega\left(n^{\frac{(\ell+2)(\ell-1)}{2\ell}} p^{\frac{\ell-1}{2}} \right),
\]
as required.

\begin{bibdiv}
\begin{biblist}

\bib{AY1993}{incollection}{
      author={Alon, Noga},
      author={Yuster, Raphael},
       title={Threshold functions for {$H$}-factors [ {MR}1249126
  (94k:05149)]},
        date={1997},
   booktitle={Combinatorics, geometry and probability ({C}ambridge, 1993)},
   publisher={Cambridge Univ. Press, Cambridge},
       pages={63\ndash 70},
      review={\MR{1476433}},
}

\bib{A1988}{book}{
      author={Andrews, George~E.},
       title={The theory of partitions},
      series={Cambridge Mathematical Library},
   publisher={Cambridge University Press, Cambridge},
        date={1998},
        ISBN={0-521-63766-X},
        note={Reprint of the 1976 original},
      review={\MR{1634067}},
}

\bib{BKS2012}{article}{
      author={Ben-Eliezer, Ido},
      author={Krivelevich, Michael},
      author={Sudakov, Benny},
       title={The size-{R}amsey number of a directed path},
        date={2012},
        ISSN={0095-8956},
     journal={J. Combin. Theory Ser. B},
      volume={102},
      number={3},
       pages={743\ndash 755},
      review={\MR{2900815}},
}

\bib{DP2015}{article}{
      author={Dudek, Andrzej},
      author={Pra{\l }at, Pawe{\l }},
       title={An alternative proof of the linearity of the size-{R}amsey number
  of paths},
        date={2015},
        ISSN={0963-5483},
     journal={Combin. Probab. Comput.},
      volume={24},
      number={3},
       pages={551\ndash 555},
      review={\MR{3326432}},
}

\bib{E1981}{article}{
      author={Erd\H{o}s, P.},
       title={On the combinatorial problems which i would most like to see
  solved},
        date={1981},
        ISSN={0209-9683},
     journal={Combinatorica},
      volume={1},
      number={1},
       pages={25\ndash 42},
      review={\MR{602413}},
}

\bib{FS2019}{article}{
      author={Ferber, Asaf},
      author={Samotij, Wojciech},
       title={Packing trees of unbounded degrees in random graphs},
        date={2019},
        ISSN={0024-6107},
     journal={J. Lond. Math. Soc. (2)},
      volume={99},
      number={3},
       pages={653\ndash 677},
         url={https://doi.org/10.1112/jlms.12179},
      review={\MR{3977885}},
}

\bib{FK2012}{article}{
      author={Frieze, Alan},
      author={Krivelevich, Michael},
       title={Packing {H}amilton cycles in random and pseudo-random
  hypergraphs},
        date={2012},
        ISSN={1042-9832},
     journal={Random Structures Algorithms},
      volume={41},
      number={1},
       pages={1\ndash 22},
      review={\MR{2943424}},
}

\bib{H2021}{article}{
      author={Heckel, Annika},
       title={Random triangles in random graphs},
        date={2021},
        ISSN={1042-9832},
     journal={Random Structures Algorithms},
      volume={59},
      number={4},
       pages={616\ndash 621},
      review={\MR{4323311}},
}

\bib{JLR}{book}{
      author={Janson, Svante},
      author={{\L }uczak, Tomasz},
      author={Rucinski, Andrzej},
       title={Random graphs},
      series={Wiley-Interscience Series in Discrete Mathematics and
  Optimization},
   publisher={Wiley-Interscience, New York},
        date={2000},
        ISBN={0-471-17541-2},
         url={https://doi.org/10.1002/9781118032718},
      review={\MR{1782847}},
}

\bib{JKV2008}{article}{
      author={Johansson, Anders},
      author={Kahn, Jeff},
      author={Vu, Van},
       title={Factors in random graphs},
        date={2008},
        ISSN={1042-9832},
     journal={Random Structures Algorithms},
      volume={33},
      number={1},
       pages={1\ndash 28},
      review={\MR{2428975}},
}

\bib{K2019}{article}{
      author={Kahn, Jeff},
       title={Asymptotics for shamir's problem},
        date={2019},
      eprint={1909.06834},
}

\bib{FKO2015}{article}{
      author={Knox, Fiachra},
      author={K\"{u}hn, Daniela},
      author={Osthus, Deryk},
       title={Edge-disjoint {H}amilton cycles in random graphs},
        date={2015},
        ISSN={1042-9832},
     journal={Random Structures Algorithms},
      volume={46},
      number={3},
       pages={397\ndash 445},
      review={\MR{3324754}},
}

\bib{Knuth1997}{book}{
      author={Knuth, Donald~E.},
       title={The art of computer programming. {V}ol. 1},
   publisher={Addison-Wesley, Reading, MA},
        date={1997},
        ISBN={0-201-89683-4},
        note={Fundamental algorithms, Third edition [of MR0286317]},
      review={\MR{3077152}},
}

\bib{KS2012}{article}{
      author={Krivelevich, Michael},
      author={Samotij, Wojciech},
       title={Optimal packings of {H}amilton cycles in sparse random graphs},
        date={2012},
        ISSN={0895-4801},
     journal={SIAM J. Discrete Math.},
      volume={26},
      number={3},
       pages={964\ndash 982},
      review={\MR{3022117}},
}

\bib{R2018}{article}{
      author={Riordan, Oliver},
       title={Random cliques in random graphs and sharp thresholds for
  $f$-factors},
        date={2018},
      eprint={1802.01948},
}

\bib{R1992}{article}{
      author={Ruci\'{n}ski, Andrzej},
       title={Matching and covering the vertices of a random graph by copies of
  a given graph},
        date={1992},
        ISSN={0012-365X},
     journal={Discrete Math.},
      volume={105},
      number={1-3},
       pages={185\ndash 197},
      review={\MR{1180202}},
}

\bib{SS1983}{article}{
      author={Schmidt, Jeanette},
      author={Shamir, Eli},
       title={A threshold for perfect matchings in random $d$-pure
  hypergraphs},
        date={1983},
        ISSN={0012-365X},
     journal={Discrete Math.},
      volume={45},
      number={2-3},
       pages={287\ndash 295},
      review={\MR{704244}},
}

\end{biblist}
\end{bibdiv}

\end{document}